\nonstopmode \numberwithin{equation}{section}
\newtheorem{theorem}{Theorem}[section]
\newtheorem{corollary}{Corollary}[section]
\theoremstyle{remark}
\theoremstyle{definition}
\newtheorem{remark}{Remark}[section]
\newtheorem{definition}{Definition}[section]
\theoremstyle{plain}
\newtheorem*{lemA}{Lemma A}
\numberwithin{equation}{section}
\numberwithin{theorem}{section}
\newcounter{minutes}\setcounter{minutes}{\time}
\newcounter{hours}\setcounter{hours}{\time}
\begin{document}

\title{Pre-Schwarzian and Schwarzian norm Estimates for class of Ozaki Close-to-Convex functions}

\author{Molla Basir Ahamed}
\address{Molla Basir Ahamed, Department of Mathematics, Jadavpur University, Kolkata-700032, West Bengal, India.}
\email{mbahamed.math@jadavpuruniversity.in}

\author{Rajesh Hossain}
\address{Rajesh Hossain, Department of Mathematics, Jadavpur University, Kolkata-700032, West Bengal, India.}
\email{rajesh1998hossain@gmail.com}

\subjclass[2020]{Primary 30C45, 30C55; Secondary 30C80,
31A10.}
\keywords{Analytic functions, Univalent functions, pre-Schwarzian and Schwarzian derivatives, Close-to-convex functions, Harmonic mappings, Norm estimates, Sharp results}

\def\thefootnote{}
\footnotetext{ {\tiny File:~\jobname.tex,
printed: \number\year-\number\month-\number\day,
          \thehours.\ifnum\theminutes<10{0}\fi\theminutes }
} \makeatletter\def\thefootnote{\@arabic\c@footnote}\makeatother

\begin{abstract}
The primary objective of this paper is to derive sharp bounds for the norms of the Schwarzian and pre-Schwarzian derivatives in the Ozaki close-to-convex functions $f$, expressed in terms of their value $f^{\prime\prime}(0)$, in particular, when the quantity is equal to zero. Additionally, we obtain sharp bounds for distortion and growth theorems. We will also derive the sharp bound of pre-Schwarzian norm for a certain class of harmonic mappings whose analytic part is fixed.
\end{abstract}
\maketitle
\pagestyle{myheadings}
\markboth{M. B. Ahamed and R. Hossain}{Pre-Schwarzian and Schwarzian norm Estimates for class of Ozaki Close-to-Convex functions}

\section{\bf Introduction}
Let $\mathcal{H}$ be the class of analytic functions in the unit disk $\mathbb{D}=\{z\in\mathbb{C} : |z|<1\}$, and $\mathcal{LU}$ denote the subclass of $\mathcal{H}$ consisting of all locally univalent functions, namely, $\mathcal{LU}=\{f\in\mathcal{H} : f^{\prime}(z)\neq 0, \; z\in\mathbb{D}\}$. The pre-Schwarzian and Schwarzian derivatives for a function $f\in\mathcal{LU}$ are defined by 
\begin{align*}
	P_f(z)=\frac{f^{\prime\prime}(z)}{f^{\prime}(z)}\;\; \mbox{and}\;\; S_f(z)=\left(\frac{f^{\prime\prime}(z)}{f^{\prime}(z)}\right)^{\prime}-\frac{1}{2}\left(\frac{f^{\prime\prime}(z)}{f^{\prime}(z)}\right)^2,
\end{align*}
respectively. It is well-known that $P(A\circ f)=Pf$ for all linear (or affine) transformations $A(w)=aw+b$, $b\neq 0$. Moreover,  the Schwarzian derivative is invariant under (non-constant) linear fractional (or M\"obius) transformations. In other words $S(T\circ f)=S(f)$ for every function $T$ of the form 
\begin{align*}
	T(w)=\frac{aw+b}{cw+d},\; ad-bc\neq 0.
\end{align*}
These are just particular cases of the chain rule for the pre-Schwarzian and Schwarzian derivatives: 
\begin{align*}
	P(g\circ f)=(Pg\circ f)f^{\prime}+Pf\;\; \mbox{and}\;\; S(g\circ f)=(Sg\circ f)\left(f^{\prime}\right)^2+Sf,
\end{align*}
respectively, which hold whenever the composition $g\circ f$ is defined.\vspace{2mm}

The pre-Schwarzian and Schwarzian derivatives of locally univalent analytic functions 
$f$ are essential tools for analyzing their geometric properties. They provide conditions for global univalence and impose geometric constraints on the range of $f$. Notably, any univalent analytic function $f$ in the unit disk $\mathbb{D}$ satisfies the sharp inequality
\begin{align*}
	|Pf(z)|\leq \frac{6}{1-|z|^2},\; z\in\mathbb{D}.
\end{align*}
Becker's univalence criterion (see \cite{Becker-JRAM-1983}) states that if \( f \) is locally univalent and analytic in \( \mathbb{D} \), then
\begin{align}\label{Eq-11.11}
	\sup_{z\in\mathbb{D}}|zPf(z)|\left(1-|z|^2\right)\leq 1.
\end{align}
Then \( f \) is univalent in the unit disk. In \cite{Becker-Pom-JRAM-1984}, Becker and Pommerenke later proved that the constant $1$ in \eqref{Eq-11.11}  is sharp.

\vspace{2mm} Also, the pre-Schwarzian and Schwarzian norms (the hyperbolic sup-norms) of $f\in\mathcal{LU}$ are defined by
\begin{align*}
	||P_f||=\sup_{z\in\mathbb{D}}\left(1-|z|^2\right)|P_f(z)|\;\;\mbox{and}\;\; ||S_f||=\sup_{z\in\mathbb{D}}\left(1-|z|^2\right)^2|S_f(z)|,
\end{align*}
respectively.\vspace{2mm}

These norms have significant meaning in the theory of Teichm\"uler spaces (see \cite{Lehto-1987}). For a univalent function $f\in\mathcal{LU}$, it is well known that $||P_f||\leq 6$ and $ ||S_f||\leq 6$ (see \cite{Kraus-MMSG-1932,Nehari-BAMS-1949}) and these estimates are best possible. On the other hand, for a locally univalent function $f\in\mathcal{LU}$, it is also known that if $||P_f||\leq 1$ (see \cite{Becker-Prommerenke-JRAM-1972,Becker-JRAM-1983}) or $||S_f||\leq 2$ (see \cite{Nehari-BAMS-1949}), then the function $f$ is univalent in $\mathbb{D}$. In $1976$, Yamashita (see \cite{Yamashita-MM-1976}) proved that $||P_f||$ is finite if, and only if, $f$ is uniformly locally univalent in $\mathbb{D}$. Furthermore, if $||P_f||\leq 2$, then $f$ is bounded in $\mathbb{D}$ (see \cite{Kim-Sugawa-RMJ-2002}). The pre-Schwarzian norm has been studied by many researchers (see, for example,  \cite{Yamashita-HMJ-1999,Sugawa-AUMCDS-1996,Ali-Pal-BDS-2023,Ali-Pal-MJM-2023,Ali-Pal-MM-2023,Ali-Pal-2023,Ali-Pal-PEMS-2023,Wang-Li-Fan-MM-2024,Carrasco-Hernández-AMP-2023} and references therein. Sharp norm estimate of Schwarzian derivative for a class of convex functions is established in \cite{Kanas-Sugawa-APM-2011}.\vspace{2mm}

Let us introduce one of the most important and useful tool known as differential subordination technique. In geometric function theory, many problems can be solved in a simple and sharp manner with the help of differential subordination. A function $f\in\mathcal{H}$ is said to be subordinate to another function $g\in\mathcal{H}$ if there exists an analytic function $\omega : \mathbb{D}\to\mathbb{D}$ with $\omega(0)=0$ such that $f(z)=g(\omega(z))$ and it is denoted by $f\prec g$. Moreover, when $g$ is univalent, then $f\prec g$ if, and only if, $f(0)=g(0)$ and $f(\mathbb{D}\subset g(\mathbb{D})$.\vspace{2mm}

Let $\mathcal{A}$ denote the subclass of $\mathcal{H}$ consisting of functions $f$ with normalized conditions $f(0)=f^{\prime}(z)-1=0$. Thus, any function $f$ in $\mathcal{A}$ has the Taylor series expansion of the form 
\begin{align}\label{Eq-1.1}
	f(z)=z+\sum_{n=2}^{\infty}a_nz^n\;\; \mbox{for all}\;\; z\in\mathbb{D}.
\end{align}
Let $\mathcal{S}$ be the subclass of $\mathcal{A}$ consisting of univalent (that is, one-to-one) functions. A function $f\in \mathcal{A}$ is called starlike (with respect to the origin) if $f(\mathbb{D})$ is starlike with respect to the origin and convex if $f(\mathbb{D})$ is convex. Let $\mathcal{S}^*(\alpha)$ and $\mathcal{C}(\alpha)$ denote respectively, the classes of starlike and convex functions of order $\alpha$ for $0\leq \alpha<1$ in $\mathcal{S}$. It is well-known that a function $f\in\mathcal{A}$ belongs to $\mathcal{S}^*(\alpha)$ if, and only if, ${\rm Re}(zf^{\prime}(z)/f(z))>\alpha$ for $z\in\mathbb{D}$, and $f\in\mathcal{C}(\alpha)$ if, and only if, ${\rm Re}(1+zf^{\prime\prime}(z)/f^{\prime}(z))>\alpha$. Similarly, a function $f\in\mathcal{A}$ belongs to $\mathcal{K}$, the class of close-to-convex functions, if and only if, there exists $g\in\mathcal{S}^*$ such that ${\rm Re}[e^{i\tau}(zf^{\prime}(z))/g(z)]>0$ for $z\in\mathbb{D}$ and $\tau\in (-\pi/2, \pi/2)$. Thus, $\mathcal{C}\subset\mathcal{S}^*\subset\mathcal{K}\subset\mathcal{S}$. In particular, when $\tau=0$, then the resulting subclass of the close-to-convex functions is denoted by $\mathcal{K}_0$.\vspace{2mm}

Although the class $\mathcal{K}$ was first formally introduced by Kaplan (see \cite{Kaplan-MMJ-1952}) in $1952$, already in $1941$, Ozaki (see \cite{Ozaki-SRTBD-1941}) considered functions $f$ in $\mathcal{A}$ satisfying the condition
\begin{align}\label{Eq-1.2}
	{\rm Re}\left(1+\frac{zf^{\prime\prime}(z)}{f^{\prime}(z)}\right)>-\frac{1}{2}\;\; \mbox{for}\;\; z\in\mathbb{D}.
\end{align}
It is important fact that the original definition of Kaplan see \cite{Kaplan-MMJ-1952} that functions satisfying \eqref{Eq-1.2} are close-to-convex and therefore, member of the class $\mathcal{S}$. \vspace{2mm}

In \cite{Kargar-Ebadian-SCMA-2017}, Kargar and Ebadian considered the following generalization to \eqref{Eq-1.2}.
\begin{definition}(see \cite[Definition 1.1]{Allu-Thomas-Tuneski-BAMS-2019})
	Let $f\in\mathcal{A}$ be a locally univalent for $z\in\mathbb{D}$ and let $-1/2<\lambda\leq 1$. Then $f\in\mathcal{F}(\lambda)$ if, and only if
	\begin{align}\label{Eq-1.3}
		{\rm Re}\left(1+\frac{zf^{\prime\prime}(z)}{f^{\prime}(z)}\right)>\frac{1}{2}-\lambda\;\; \mbox{for}\;\; z\in\mathbb{D}.
	\end{align}
\end{definition}
It is clear that, when $-1/2\leq\lambda\leq 1/2$, functions defined by \eqref{Eq-1.3} provide a subset of $\mathcal{C}$, with $\mathcal{F}(1/2)=\mathcal{C}$, and, since $1/2-\lambda\geq -1/2$ when $\lambda\leq 1$, functions in $\mathcal{F}(\lambda)$ are close-to-convex when $1/2\leq\lambda\leq 1$. We shall cal members $f$ of $\mathcal{F}(\lambda)$ when $1/2\leq\lambda\leq 1$ Ozaki close-to-convex functions and denote this class by $\mathcal{F}_0(\lambda)$. One important note that in contrast to the definition of the class $\mathcal{K}$, the definition of $\mathcal{F}(\lambda)$ does not involve an independent starlike function $g$, but, as was shown in \cite{Ponnuswamy-Sahoo-Yanagihari-NA-1963}, members of $\mathcal{F}(1)$ have coefficients which grow at the same rate as those in $\mathcal{K}$, that is, $\mathcal{O}(n)$ as $n\to\infty$.
\subsection{Pre-Schwarzian and Schwarzian derivatives of analytic functions.}
	
	The pre-Schwarzian and Schwarzian derivatives are important research tools in geometric function theory, especially to characterize Teichum\"uller space by using the pre-Schwarzian and Schwarzian derivatives embedding models. They are also powerful tools in the research of the inner radius of univalency for planer domains and quasiconformal extensions,  (see \cite{Lehto-1987,Lehto-JAM-1979}). The research of pre Schwarzian and Schwarzian derivatives has a long history and is closely related to other disciplines. As early as $1836$, Kummer firstly introduced the Schwarzian derivative when studying hypergeometric partial differential equations. Later, people have done a lot of research on the relationship between Schwarzian derivative and Univalent function. Several sufficient conditions for univalent analytic functions are obtained by using the notions of Pre-Schwarzian and Schwarzian derivatives.\vspace{2mm}
	
	 It is well known that the pre-Schwarzian norm $||Ph||\leq 6$ for the Univalent analytic function $h$ is defined in $\mathbb{D}$. In $1972$, Becker \cite{Becker-Prommerenke-JRAM-1972} used the pre-Schwarzian derivative to obtain the sufficient condition that the function in $\mathbb{D}$ is univalent, in other words, if $||Ph||\leq1$, then the function $h$ is univalent in $\mathbb{D}$. For different aspect of Schwarzian derivatives, we refer to the articles  \cite{Carrasco-Hernández-AMP-2011,Bhowmik-Wriths-CM-2012,Aghalary-Orouji-COAT-2014} and references therein.
	\subsection{Pre-Schwarzian and derivatives of Harmonic functions}
	A twice continuously differentiable complex-valued function $f=u+iv$ in a domain $\Omega$ is called harmonic if $u$ and $v$ both are harmonic in $\Omega$ or equivalently if it satisfies the Laplace equation $\Delta f=4f{_{z\bar{z}}}=0$. Every harmonic mapping $f$ has a canonical representation of the form $f=h+\bar{g}$, where $h$ and $g$ are analytic functions in $\Omega$ called the analytic and co-analytic  part of $f$, respectively. The jacobian of $f=h+\bar{g}$ is given by $J_{f}(z)=|f{_z}|^2-|f{_{\bar{z}}}|^2=|h^{\prime}(z)|^2-|g^{\prime}(z)|^2$. The harmonic mapping $f$ is called orientation-preserving or sense-preserving mapping if $J_{f}(z)>0$ and is called orientation-reversing or sense-reversing mapping if $J_{f}(z)<0$. For a sense-preserving harmonic mapping $f=h+\bar{g}$, its second complex dilatation is $\Omega$ with $\omega=g^{\prime}/h^{\prime}$ and $|\omega(z)<1|$ in $\Omega_1$. According to Lewy's theorem, see \cite{Lewy-BAMS-1936}, a harmonic mapping $f=h+\bar{g}$ is locally univalent in $\Omega$ if its Jacobian  $J_{f}(z)\neq0$. Stuyding the sufficient conditions for a locally univalent harmonic mapping to be univalent , as well as the necessary conditions for a univalent harmonic mapping is an interesting research area. For harmonic pre-Schwarzian and its applications, we refer to the articles \cite{Carrasco-Hernández-AMP-2023,Wang-Li-Fan-MM-2024,Hernández-Martín-JGA-2015,Hu-Fan-KMJ-2021,Liu-Ponnusamy-BSM-2019}.\vspace*{2mm}

 	Harmonic mappings play the natural role in parameterizing minimal surfaces in the context of differential geometry. Planner harmonic mappings have application not only in the differential geometry but also in the various field of engineering, physics, operations research and other intriguing aspects of applied mathematics. The theory of harmonic functions has been used to study and solve fluid flow problems (see \cite{aleman-2012}. The theory of univalent harmonic functions having prominent geometric properties like starlikeness, convexity and close-to-convexity appear naturally while dealing with planner fluid dynamical problems. For instance, the fluid flow problem on a convex domain satisfying an interesting geometric property has been extensively studied by Aleman and Constantin  \cite{aleman-2012}. With the help of geometric properties of harmonic mappings, Constantin and Martin \cite{constantin-2017} have obtained a complete solution of classifying all two dimensional fluid flows.\vspace*{1mm}
	
	Let $\mathcal{H}$ denotes the class of all harmonic mappings $f=h+\bar{g}$ in $\mathbb{D}$ with the normalization $h(0)=h^{\prime}(0)-1=g(0)=0$. The function $f=h+\bar{g}$ is of the form 
	\begin{align*}
		h(z)=z+\sum_{n=2}^{\infty}a_nz^n\;\;\mbox{and}\;\;g(z)=\sum_{n=1}^{\infty}b_nz^n.
	\end{align*}
	As harmonic mapping is the generalization of analytic function, there are many open problem and interesting fact in planer harmonic mappings, (see, e.g. \cite{Clunie-Sheil-Small-AASFMSAIM-1984,Duren-2004,Yamashita-MM-1976}).\vspace{2mm}
	
	In $2003$, Chuaqui \emph{et al.} \cite{Chuaqui-Duren-Osgood-JAM-2003} defined the Schwarzian derivative of harmonic mappings, if $\omega$ equals the square of an analytic functions. Without assuming any additional condition on $\omega$, in \cite{Hernández-Martín-JGA-2015}, Hern$\acute{a}$ndez and Mart$\acute{i}$n  gave another definition of Schwarzian derivative for the harmonic mappings. For locally univalent harmonic mapping  $f=h+\bar{g}$, $Sf$ is given by
	\begin{align*}
		Sf&=\left(\log J_{f}\right)_{zz}-\frac{1}{2}\left(\log J_{f}\right)_{z}\\&=Sh+\frac{\bar{\omega}}{1-|\omega|^2}\left(\frac{h^{\prime\prime}}{h^{\prime}}\omega-\omega^{\prime\prime}\right)-\frac{3}{2}\left(\frac{\omega^{\prime}\bar{\omega}}{1-|\omega|^2}\right)^2,
	\end{align*}
	where $Sh$ is the classical Schwarzian derivative of the analytic function $h$.\vspace{2mm} 
	
	The pre-Schwarzian derivative of harmonic mapping $f=h+\bar{g}$ defined as
	\begin{align*}
		Pf(z)=\left(\log J_{f}\right)_{z}=\frac{h^{\prime\prime}(z)}{h^{\prime}(z)}-\frac{\omega^{\prime}(z)\bar{\omega}(z)}{1-|\omega(z)|^2}.
	\end{align*}
\section{\textbf{Pre-Schwarzian and Schwarzian norm Estimates for Ozaki Close-to-Convex functions}}\label{Sec-2}
In this section, we firstly give the equivalent characterization of the class $\mathcal{F}_0(\lambda)$ (Ozaki close-to-convex class), next we present the distortion and growth theorem, and then we derive the results of pre-Schwarzian and Schwarzian norms for the class of $\mathcal{F}_0(\lambda)$  in terms of the value of $f^{\prime\prime}(0)$.
\begin{theorem}\label{Th-4.1}
	For $\frac{1}{2}\leq\lambda\leq1$, the following are equivalent:
	\begin{enumerate}
		\item[\emph{(i)}.] $f\in\mathcal{F}_0(\lambda)$.\vspace{2mm}
		
		\item[\emph{(ii)}.] $\displaystyle{\rm Re}\left(1+\frac{zf^{\prime\prime}(z)}{f^{\prime}(z)}\right)\geq\frac{1-2\lambda}{2}+\frac{1}{2}\left(\frac{1-|z|^2}{1+2\lambda}\right)\bigg|\frac{f^{\prime\prime}(z)}{f^{\prime}(z)}\bigg|^2$\;\;\;\mbox{for}\;\;$\displaystyle\lambda\in[1/2,1]$,\vspace{2mm}\\
		
		\item[\emph{(iii)}.] $\displaystyle\bigg|(1-|z|^2)\frac{f^{\prime\prime}(z)}{f^{\prime}(z)}-(1+2\lambda)\bar{z}\bigg|\leq1+2\lambda$.
	\end{enumerate}
 
\end{theorem}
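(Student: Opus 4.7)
My plan is to prove the cycle $\mathrm{(i)}\Rightarrow\mathrm{(iii)}\Rightarrow\mathrm{(ii)}\Rightarrow\mathrm{(i)}$. The non-trivial step is $\mathrm{(i)}\Rightarrow\mathrm{(iii)}$, which I would handle via the Schwarz lemma; the remaining implications are routine algebraic manipulations. Set $p(z)=1+zf''(z)/f'(z)$ and write $A(z)=f''(z)/f'(z)$ for brevity. Condition $\mathrm{(i)}$ asserts that $p$ maps $\mathbb{D}$ into the half-plane $H_\lambda=\{w:{\rm Re}\,w>(1-2\lambda)/2\}$ with $p(0)=1$. A direct check shows that the M\"obius transformation $\psi(w)=(w-1)/(w+2\lambda)$ maps $H_\lambda$ conformally onto $\mathbb{D}$ with $\psi(1)=0$: on the boundary line ${\rm Re}\,w=(1-2\lambda)/2$, the numerator and denominator both have modulus $\sqrt{((1+2\lambda)/2)^2+t^2}$. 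Therefore $\omega(z):=\psi(p(z))=zA(z)/(1+2\lambda+zA(z))$ is a self-map of $\mathbb{D}$ with $\omega(0)=0$, so Schwarz's lemma gives $|\omega(z)|\le|z|$. Cross-multiplying, squaring, and cancelling a factor of $|z|^{2}$ (with $z=0$ handled by continuity) yields $|A(z)|^{2}\le|(1+2\lambda)+zA(z)|^{2}$.

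Next I would verify that this inequality is algebraically equivalent to each of $\mathrm{(ii)}$ and $\mathrm{(iii)}$. Expanding the square on the right and regrouping produces
\[
(1-|z|^2)|A(z)|^2-2(1+2\lambda)\,{\rm Re}(zA(z))\le(1+2\lambda)^2,
\]
which I shall denote $(\ast)$. Expanding $|(1-|z|^{2})A(z)-(1+2\lambda)\bar z|^{2}\le(1+2\lambda)^{2}$, collecting terms, and dividing through by $1-|z|^{2}>0$ also recovers $(\ast)$; hence $\mathrm{(iii)}\Leftrightarrow(\ast)$. Dividing $(\ast)$ by $2(1+2\lambda)>0$ and rearranging yields $\mathrm{(ii)}$ exactly, and every step is reversible, so $\mathrm{(ii)}\Leftrightarrow(\ast)$ as well.

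The implication $\mathrm{(ii)}\Rightarrow\mathrm{(i)}$ is essentially free: dropping the non-negative quantity $(1-|z|^{2})|A(z)|^{2}/\bigl(2(1+2\lambda)\bigr)$ from the right-hand side of $\mathrm{(ii)}$ gives ${\rm Re}(1+zA(z))\ge(1-2\lambda)/2$. To upgrade this weak inequality to the strict one in the original definition of $\mathcal{F}_0(\lambda)$, I would invoke the minimum principle for the harmonic function $z\mapsto{\rm Re}(1+zA(z))$; it equals $1>(1-2\lambda)/2$ at the origin (using $\lambda\ge 1/2$), so it cannot attain its infimum at an interior point unless it is constant, which is impossible since $\lambda\ne -1/2$. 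The main obstacle I anticipate is pinning down the precise M\"obius map $\psi$ with the correct pole so that Schwarz's lemma delivers the sharp constant $1+2\lambda$ appearing in $\mathrm{(iii)}$; once $\psi$ is identified, the remainder is careful bookkeeping of the squared-modulus expansions.
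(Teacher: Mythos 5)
Your proposal is correct and follows essentially the same route as the paper: your M\"obius map $\psi(w)=(w-1)/(w+2\lambda)$ is exactly the inverse of the subordination target $(1+2\lambda z)/(1-z)$ used there, so the Schwarz-lemma inequality $|A(z)|^2\le|(1+2\lambda)+zA(z)|^2$ you obtain is identical to the paper's inequality $|\phi(z)|\le 1$ coming from writing the Schwarz function as $\omega(z)=z\phi(z)$, and the passage to (ii) and (iii) is the same expansion and completion of the square. If anything you are more careful than the paper, which only writes out the forward implications and never addresses upgrading the weak inequality in (ii) back to the strict inequality defining $\mathcal{F}_0(\lambda)$; your minimum-principle step fills that gap.
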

\begin{proof}[\bf Proof of Theorem \ref{Th-4.1}]
	Firstly, we will prove $(i)$ is equivalent to $(ii)$.\\
	If  $f\in\mathcal{F}_0(\lambda)$, then there exists a Schwarz function $\omega: \mathbb{D}\rightarrow\mathbb{D}$ such that 
	\begin{align}\label{Eq-4.1}
		1+\frac{zf^{\prime\prime}(z)} {f^{\prime}(z)}= \frac{1+2\lambda\omega(z)}{1-\omega(z)}
	\end{align}
	holds with $\omega(0)=0$.\vspace{2mm} 
	
	Let $\omega(z)=z\phi(z)$ for some analytic function $\phi$ that satisfies $\phi(\mathbb{D})\subseteq\mathbb{D}$. Then it follows from  \eqref{Eq-4.1} that
	\begin{align*}
		\frac{f^{\prime\prime}(z)}{f^{\prime}(z)}=\frac{(1+2\lambda)\omega(z)}{z(1-\omega(z))}
	\end{align*}
	which shows that
	\begin{align}\label{Eq-4.2}
	\frac{f^{\prime\prime}(z)}{f^{\prime}(z)}=\frac{(1+2\lambda)\phi(z)}{(1-z\phi(z))}.
	\end{align}
	Consequently, we have
	\begin{align}\label{Eq-4.3}
		\phi(z)=\frac{\frac{f^{\prime\prime}(z)}{f^{\prime}(z)}}{(1+2\lambda)+\frac{zf^{\prime\prime}(z)}{f^{\prime}(z)}}.
	\end{align}
	Since $|\phi(z)|^2\leq1$, we have the inequality
	\begin{align*}
		\bigg|\frac{f^{\prime\prime}(z)}{f^{\prime}(z)}\bigg|^2\leq(1+2\lambda)^2+2(1+2\lambda)	{\rm Re}\left(\frac{zf^{\prime\prime}(z)}{f^{\prime}(z)}\right)+|z|^2\bigg|\frac{f^{\prime\prime}(z)}{f^{\prime}(z)}\bigg|^2.
	\end{align*}
	By factorizing, an easy computation gives that
	\begin{align}\label{Eq-4.22}
		(1+2\lambda)\left[(1+2\lambda)+2{\rm Re}\left(\frac{zf^{\prime\prime}(z)}{f^{\prime}(z)}\right)\right]\ge(1-|z|^2)\bigg|\frac{f^{\prime\prime}(z)}{f^{\prime}(z)}\bigg|^2
	\end{align}
	which implies that
	\begin{align*}
		2{\rm Re}\left(\frac{zf^{\prime\prime}(z)}{f^{\prime}(z)}\right)\geq-(1+2\lambda)+\left(\frac{1-|z|^2}{1+2\lambda}\right)\bigg|\frac{f^{\prime\prime}(z)}{f^{\prime}(z)}\bigg|^2.
	\end{align*}
	When $\lambda\in[\frac{1}{2}, 1]$, it is easy to see that
	\begin{align}\label{Eq-4.3}
	{\rm Re}\left(1+\frac{zf^{\prime\prime}(z)}{f^{\prime}(z)}\right)\geq\frac{1-2\lambda}{2}+\frac{1}{2}\left(\frac{1-|z|^2}{1+2\lambda}\right)\bigg|\frac{f^{\prime\prime}(z)}{f^{\prime}(z)}\bigg|^2.
	\end{align}
	Next, we will prove that $(ii)$ is equivalent to $(iii)$.\\
	By multiplying \eqref{Eq-4.22} by $(1-|z|^2)$ both side, we have
	\begin{align*}
		(1-|z|^2)^2\bigg|\frac{f^{\prime\prime}(z)}{f^{\prime}(z)}\bigg|^2\leq(1+2\lambda)^2(1-|z|^2)+2(1+2\lambda)(1-|z|^2){\rm Re}\left(\frac{zf^{\prime\prime}(z)}{f^{\prime}(z)}\right)
	\end{align*}
	which turns out that
	\begin{align*}
			(1-|z|^2)^2\bigg|\frac{f^{\prime\prime}(z)}{f^{\prime}(z)}\bigg|^2-2(1+2\lambda)(1-|z|^2){\rm Re}\left(\frac{zf^{\prime\prime}(z)}{f^{\prime}(z)}\right)+(1+2\lambda)^2|z|^2\leq(1+2\lambda)^2.
	\end{align*}
	Thus, we have the desired inequality
	\begin{align}\label{Eq-4.4}
	\bigg|	(1-|z|^2)\left(\frac{f^{\prime\prime}(z)}{f^{\prime}(z)}\right)-(1+2\lambda)\bar{z}\bigg|\leq(1+2\lambda).
	\end{align}
	Hence, complete the proofs.
\end{proof}
\begin{remark}
	We have the following results for the class of convex functions which are immediate from Theorem \ref{Th-4.1}.
\end{remark}
\begin{corollary}
	If $f\in\mathcal{F}_0(1/2)=\mathcal{C}$, then from \eqref{Eq-4.3},  we have
	\begin{align*}
		{\rm Re}\left(1+\frac{zf^{\prime\prime}(z)}{f^{\prime}(z)}\right)\geq\frac{1-2\lambda}{2}+\frac{1}{2}\left(\frac{1-|z|^2}{1+2\lambda}\right)\bigg|\frac{h^{\prime\prime}(z)}{h^{\prime}(z)}\bigg|^2
	\end{align*}
	which implies that
	\begin{align*}
	{\rm Re}\left(1+\frac{zf^{\prime\prime}(z)}{f^{\prime}(z)}\right)\geq\frac{1}{4}(1-|z|^2)\bigg|\frac{f^{\prime\prime}(z)}{f^{\prime}(z)}\bigg|^2.
	\end{align*}
\end{corollary}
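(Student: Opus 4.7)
The plan is to observe that this corollary is a direct specialization of Theorem \ref{Th-4.1}(ii) at the endpoint $\lambda = 1/2$, coupled with the identification $\mathcal{F}_0(1/2) = \mathcal{C}$ recorded in the introduction. Since the theorem has already been proved and the equivalences $(i)\Leftrightarrow(ii)\Leftrightarrow(iii)$ are in hand, no new analytic machinery is needed; the work is purely algebraic.

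First I would invoke part (ii) of Theorem \ref{Th-4.1}, which for any $f\in\mathcal{F}_0(\lambda)$ and $\lambda\in[1/2,1]$ yields
\begin{align*}
{\rm Re}\left(1+\frac{zf^{\prime\prime}(z)}{f^{\prime}(z)}\right)\ge\frac{1-2\lambda}{2}+\frac{1}{2}\left(\frac{1-|z|^2}{1+2\lambda}\right)\bigg|\frac{f^{\prime\prime}(z)}{f^{\prime}(z)}\bigg|^2.
\end{align*}
Next, since $\mathcal{C}=\mathcal{F}_0(1/2)$, the assumption $f\in\mathcal{C}$ entitles me to substitute $\lambda=1/2$ into the inequality above. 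At this value the constant term $(1-2\lambda)/2$ vanishes, and the coefficient $(1-|z|^2)/(1+2\lambda)$ collapses to $(1-|z|^2)/2$. Together with the factor of $1/2$ already present, the overall coefficient becomes $1/4$, producing the target inequality
\begin{align*}
{\rm Re}\left(1+\frac{zf^{\prime\prime}(z)}{f^{\prime}(z)}\right)\ge\frac{1}{4}(1-|z|^2)\bigg|\frac{f^{\prime\prime}(z)}{f^{\prime}(z)}\bigg|^2.
\end{align*}

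There is essentially no obstacle here beyond ensuring that the endpoint $\lambda=1/2$ is permitted in Theorem \ref{Th-4.1}(ii), which it is since the stated range is $\lambda\in[1/2,1]$. The only minor point to flag is that the intermediate displayed inequality in the corollary statement still contains the parameter $\lambda$ and an occurrence of $h$ that should be $f$; these should be read as the generic form of Theorem \ref{Th-4.1}(ii) immediately before setting $\lambda=1/2$, and the substitution step is what delivers the clean quadratic estimate in $|f^{\prime\prime}/f^{\prime}|$ that is characteristic of convex functions.
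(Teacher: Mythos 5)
Your proposal is correct and follows exactly the route the paper intends: the corollary is a direct specialization of Theorem \ref{Th-4.1}(ii) at $\lambda=1/2$, where the constant term $(1-2\lambda)/2$ vanishes and the coefficient $\tfrac{1}{2}\cdot\tfrac{1}{1+2\lambda}$ becomes $\tfrac14$. Your remark that the intermediate display should have $f$ in place of $h$ and should be read as the generic $\lambda$-form before substitution is also an accurate reading of the (typo-laden) statement.
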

\begin{corollary}
	If $f\in\mathcal{F}_0(1/2)=\mathcal{C}$, then we have  from \eqref{Eq-4.4}
	\begin{align*}
		\bigg|	(1-|z|^2)\left(\frac{f^{\prime\prime}(z)}{f^{\prime}(z)}\right)-(1+2\lambda)\bar{z}\bigg|\leq(1+2\lambda)
		\end{align*}
		which implies that
	\begin{align*}
		\bigg|(1-|z|^2)\frac{f^{\prime\prime}(z)}{f^{\prime}(z)}-2\bar{z}\bigg|\leq2.
	\end{align*}
\end{corollary}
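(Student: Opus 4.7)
\medskip

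The plan is to obtain this corollary as an immediate specialization of part (iii) of Theorem \ref{Th-4.1}, since the class of normalized convex functions $\mathcal{C}$ coincides with $\mathcal{F}_0(1/2)$ (as noted right after \eqref{Eq-1.3}). The only substantive content is checking that the parameter boundary case $\lambda = 1/2$ is legitimately covered by the hypothesis $1/2 \leq \lambda \leq 1$ in Theorem \ref{Th-4.1}, which it is.

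First, I would invoke Theorem \ref{Th-4.1}(iii) with $\lambda = 1/2$. Since $\mathcal{F}_0(1/2) = \mathcal{C}$, any $f \in \mathcal{C}$ satisfies
\begin{align*}
\bigg|(1-|z|^2)\frac{f^{\prime\prime}(z)}{f^{\prime}(z)} - (1+2\lambda)\bar z\bigg| \leq 1+2\lambda.
\end{align*}
Next, I would compute $1 + 2\lambda = 1 + 2 \cdot \tfrac{1}{2} = 2$ and substitute on both sides of the inequality, which produces
\begin{align*}
\bigg|(1-|z|^2)\frac{f^{\prime\prime}(z)}{f^{\prime}(z)} - 2\bar z\bigg| \leq 2,
\end{align*}
precisely the claimed conclusion. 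No additional estimation, subordination argument, or invocation of the Schwarz lemma is needed, since all the analytic work has already been done inside the proof of Theorem \ref{Th-4.1}.

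Because the derivation is a one-line substitution, there is no genuine obstacle here; the only point worth flagging is consistency of notation. The displayed intermediate inequality in the statement still carries the symbol $\lambda$, so for clarity I would either rewrite that line with $\lambda$ replaced by $1/2$ from the outset, or explicitly tell the reader that $\lambda = 1/2$ has been inserted before collapsing $1 + 2\lambda$ to $2$. With that small cosmetic tidy-up, the proof is complete in a single sentence citing Theorem \ref{Th-4.1}(iii).
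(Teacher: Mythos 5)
Your proposal is correct and matches the paper's own (implicit) argument exactly: the paper presents this corollary as an immediate consequence of Theorem \ref{Th-4.1}(iii), i.e.\ inequality \eqref{Eq-4.4}, specialized to $\lambda=1/2$ so that $1+2\lambda=2$. Your remark about cleaning up the residual $\lambda$ in the intermediate display is a fair editorial point, but mathematically nothing further is needed.
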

We obtain the following result which is distortion theorem and growth theorem for the class $ \mathcal{F}_0(\lambda) $.
\begin{theorem}\label{Th-4.2}
	If $f\in\mathcal{F}_0(\lambda)$, then for all $z\in\mathbb{D}$ and $\frac{1}{2}\leq\lambda\leq1$,
	\begin{align*}
		\frac{1}{(1+|z|^2)^{\frac{1+2\lambda}{2}}}\leq|f^{\prime}(z)|\leq\frac{1}{(1-|z|^2)^{\frac{1+2\lambda}{2}}}
	\end{align*}
	and
	\begin{align*}
		\int_{0}^{|z|}\frac{1}{(1+\xi^2)^{\frac{1+2\lambda}{2}}} d|\xi|\leq|f(z)|\leq\int_{0}^{|z|}\frac{1}{(1-\xi^2)^{\frac{1+2\lambda}{2}}} d|\xi|.
	\end{align*}
\end{theorem}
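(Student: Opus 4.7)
The plan is to translate the disk containment from item (iii) of Theorem \ref{Th-4.1} into pointwise bounds on the radial derivative of $\log|f'|$, integrate those bounds along a radius, and then exploit univalence of $f$ (which holds because every Ozaki close-to-convex function lies in $\mathcal{K}\subset\mathcal{S}$) for the lower growth estimate.

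First I would set $z=re^{i\theta}$ and multiply the quantity inside the modulus in (iii) by the unimodular factor $e^{i\theta}$, which preserves the inequality and yields
\[
\left|(1-r^2)\,e^{i\theta}\,\frac{f''(z)}{f'(z)}-(1+2\lambda)r\right|\leq 1+2\lambda.
\]
This places $(1-r^2)e^{i\theta}f''(z)/f'(z)$ in a Euclidean disk centered at the real number $(1+2\lambda)r$ of radius $1+2\lambda$, so taking real parts sandwiches $(1-r^2){\rm Re}(e^{i\theta}f''(z)/f'(z))$ between $(1+2\lambda)(r-1)$ and $(1+2\lambda)(r+1)$. Because
\[
\frac{d}{dr}\log|f'(re^{i\theta})|={\rm Re}\!\left(e^{i\theta}\frac{f''(z)}{f'(z)}\right),
\]
dividing by $1-r^2$ produces sharp pointwise upper and lower bounds for this derivative that depend only on $r$.

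I would then integrate these two inequalities in $r$ from $0$ to $|z|$, using the normalization $|f'(0)|=1$ as the initial condition; exponentiating yields the claimed distortion estimate for $|f'(z)|$. For the upper growth bound, I would simply integrate $|f'|$ along the radial segment $\zeta=te^{i\theta}$, $t\in[0,|z|]$, and insert the pointwise distortion upper bound into the integrand.

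The delicate step is the lower growth bound, and I would handle it by exploiting univalence of $f$: let $\Gamma\subset\mathbb{D}$ denote the preimage under $f$ of the straight segment $[0,f(z)]$, so that
\[
|f(z)|=\int_{[0,f(z)]}|dw|=\int_{\Gamma}|f'(\zeta)|\,|d\zeta|.
\]
Applying the pointwise lower distortion bound for $|f'(\zeta)|$, and then bounding $|d\zeta|$ from below by the modulus $d|\zeta|$ of its radial component, reduces the curvilinear integral over $\Gamma$ to the desired radial integral from $0$ to $|z|$. The main obstacle, in my view, is precisely this last reduction: $\Gamma$ need not be radially monotone, and the crucial facts that make the estimate go through are that the lower bound for $|f'|$ depends only on $|\zeta|$ and that the total radial variation along $\Gamma$ is at least $|z|$.
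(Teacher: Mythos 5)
Your derivation from item (iii) of Theorem \ref{Th-4.1} is internally correct, but it does not yield the inequalities stated in the theorem. Taking real parts in
\[
\left|(1-r^2)\,e^{i\theta}\,\frac{f''(z)}{f'(z)}-(1+2\lambda)r\right|\leq 1+2\lambda
\]
and dividing by $1-r^2$ gives
\[
-\frac{1+2\lambda}{1+r}\leq\frac{\partial}{\partial r}\log\left|f'(re^{i\theta})\right|\leq\frac{1+2\lambda}{1-r},
\]
which integrates to $(1+r)^{-(1+2\lambda)}\leq|f'(z)|\leq(1-r)^{-(1+2\lambda)}$; these are strictly weaker than the asserted $(1+r^2)^{-(1+2\lambda)/2}\leq|f'(z)|\leq(1-r^2)^{-(1+2\lambda)/2}$ for every $r\in(0,1)$. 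Moreover, no refinement of (iii) alone can close the gap: the function with $f'(z)=(1-z)^{-(1+2\lambda)}$ belongs to $\mathcal{F}_0(\lambda)$ and satisfies (iii) with equality, yet $|f'(r)|=(1-r)^{-(1+2\lambda)}>(1-r^2)^{-(1+2\lambda)/2}$ for all $0<r<1$. The stated bounds are true only under the extra normalization $f''(0)=0$, which the paper assumes implicitly: its proof starts from $\phi(0)=0$, and by \eqref{Eq-4.2} this is equivalent to $f''(0)=0$.

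The missing ingredient is the second-order Schwarz lemma information that this normalization supplies. Writing $\omega(z)=z\phi(z)$ as in the proof of Theorem \ref{Th-4.1}, the condition $f''(0)=0$ forces $\phi(0)=0$, hence $|\phi(z)|\leq|z|$ and $|\omega(z)|\leq|z|^2$. Running the same completion-of-squares computation with this sharper bound replaces (iii) by the tighter containment
\[
\left|(1-|z|^4)\,\frac{zf''(z)}{f'(z)}-(1+2\lambda)|z|^4\right|\leq(1+2\lambda)|z|^2,
\]
from which
\[
-\frac{(1+2\lambda)r}{1+r^2}\leq\frac{\partial}{\partial r}\log\left|f'(re^{i\theta})\right|\leq\frac{(1+2\lambda)r}{1-r^2},
\]
and these do integrate to the claimed $(1\pm r^2)^{-(1+2\lambda)/2}$. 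Your handling of the growth estimates (radial integration for the upper bound; for the lower bound, the preimage of the segment $[0,f(z_0)]$ through a point of minimum modulus together with $|d\zeta|\geq d|\zeta|$) is the same standard argument the paper uses and is fine once the correct distortion bounds are in hand.
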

\begin{proof}[\bf Proof of Theorem \ref{Th-4.2}]
	Let $f\in\mathcal{F}_0(\lambda)$. In view of \eqref{Eq-4.3}, we obtain $\phi(0)=0$. By using the Schwarz lemma, we have
	\begin{align*}
		\bigg|\frac{\frac{f^{\prime\prime}(z)}{f^{\prime}(z)}}{(1+2\lambda)+\frac{zf^{\prime\prime}(z)}{f^{\prime}(z)}}\bigg|^2\leq|z|^2
	\end{align*}
	which shows that
	\begin{align*}
	\bigg|\frac{f^{\prime\prime}(z)}{f^{\prime}(z)}\bigg|^2\leq(1+2\lambda)^2|z|^2+2(1+2\lambda)|z|^2{\rm Re}\left(\frac{zf^{\prime\prime}(z)}{f^{\prime}(z)}\right)+|z|^2\bigg|\frac{zf^{\prime\prime}(z)}{f^{\prime}(z)}\bigg|^2.
	\end{align*}
	Thus, we have
	\begin{align}\label{Eq-4.5}
		(1-|z|^4)\bigg|\frac{f^{\prime\prime}(z)}{f^{\prime}(z)}\bigg|^2\leq(1+2\lambda)^2|z|^2+2(1+2\lambda)|z|^2{\rm Re}\left(\frac{zf^{\prime\prime}(z)}{f^{\prime}(z)}\right).
	\end{align}
	Multiplying both side of \eqref{Eq-4.5} by $(1-|z|^4)$, we obtain
	\begin{align}\label{Eq-4.6}
		(1-|z|^4)^2\bigg|\frac{f^{\prime\prime}(z)}{f^{\prime}(z)}\bigg|^2-2(1+2\lambda)|z|^2(1-|z|^4){\rm Re}\left(\frac{zf^{\prime\prime}(z)}{f^{\prime}(z)}\right)\leq(1+2\lambda)^2|z|^2(1-|z|^4).
	\end{align}
	Adding both side $\left((1+2\lambda)|z|^2\bar{z}\right)^2$ in \eqref{Eq-4.6}, we see that
	\begin{align}\label{Eq-4.7}
	(1-|z|^4)^2&\bigg|\frac{f^{\prime\prime}(z)}{f^{\prime}(z)}\bigg|^2-2(1+2\lambda)|z|^2(1-|z|^4){\rm Re}\left(\frac{zf^{\prime\prime}(z)}{f^{\prime}(z)}\right)+(1+2\lambda)^2|z|^4\bar{z}\\&\nonumber\leq(1+2\lambda)^2|z|^2(1-|z|^4)+(1+2\lambda)^2|z|^4\bar{z}.
	\end{align}
	Multiplying both side of \eqref{Eq-4.7} by $|z|$, a simple calculation shows that
	\begin{align}
		\bigg|(1-|z|^4)\frac{zf^{\prime\prime}(z)}{f^{\prime}(z)}-(1+2\lambda)|z|^4\bigg|\leq(1+2\lambda)|z|^2
	\end{align}
	which implies that
	\begin{align}
	\frac{-(1+2\lambda)|z|^2}{1+|z|^2}\leq{\rm \rm Re}\left(\frac{zf^{\prime\prime}(z)}{f^{\prime}(z)}\right)\leq\frac{(1+2\lambda)|z|^2}{1-|z|^2}.
	\end{align}
	Let $z=re^{i\theta}$. Hence, 
	\begin{align}\label{Eq-4.10}
		\frac{-(1+2\lambda)r}{1+r^2}\leq \dfrac{\partial}{\partial r}\left(\log|f^{\prime}(re^{i\theta})|\right)\leq\frac{(1+2\lambda)r}{1-r^2}.
	\end{align}
When $1/2\leq\lambda\leq1$ integrating \eqref{Eq-4.10} w.r.t. $r$, we obtain
\begin{align}
		\frac{1}{(1+|z|^2)^{\frac{1+2\lambda}{2}}}\leq|f^{\prime}(z)|\leq\frac{1}{(1-|z|^2)^{\frac{1+2\lambda}{2}}}
	\end{align}
	Next, for the growth part of the theorem, from the upper bound it follows that
	\begin{align}
		|f^{\prime}(re^{i\theta})|=\bigg|\int_{0}^{r}f^{\prime}(re^{i\theta})e^{i\theta} dt\bigg|\leq\int_{0}^{r}|f^{\prime}(re^{i\theta})| dt\leq\int_{0}^{r}\frac{1}{(1-t^2)^{\frac{1+2\lambda}{2}}} dt
	\end{align}
	which implies 
	\begin{align}
		|f(z)|\leq\int_{0}^{|z|}\frac{1}{(1-\xi^2)^{\frac{1+2\lambda}{2}}} d|\xi|
	\end{align}
	for all $z\in\mathbb{D}$.\vspace{2mm}
	
	 It is well-known that if $f(z_0)$ is a point of minimum modulus on the image of the circle $|z|=r$ and $\gamma=f^{-1}(\Gamma)$, where $\Gamma$ is the line segment from $0$ to $f(z_0)$, then 
	\begin{align}
			|f(z)|\geq	|f(z_0)|\geq\int_{0}^{|z|}\frac{1}{(1+\xi^2)^{\frac{1+2\lambda}{2}}} d|\xi|.
	\end{align}
	This completes the proof.
\end{proof}
Now, we will find the sharp bound of the pre-Schwarzian and Schwarzian norms in terms of value $f^{\prime\prime}(0)$ in the class $\mathcal{F}_0(\lambda)$, under the assumption  that $f^{\prime\prime}(0)=0$. In order to prove our result, we need the following lemma, which we can find in the proof of \cite[Theorem 6]{Carrasco-Hernández-AMP-2023}.
\begin{lemA}\emph{\cite{Carrasco-Hernández-AMP-2023}}
	If $\phi(z):\mathbb{D}\rightarrow\mathbb{D}$ be analytic function, then 
	\begin{align}
		\frac{|\phi(z)|^2}{1-|\phi(z)|^2}\leq\frac{(\phi(0)+|z|)^2}{(1-|\phi(0)|)^2(1-|z|^2)|)}
	\end{align}
\end{lemA}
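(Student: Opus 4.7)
The plan is to deduce this from a single application of the Schwarz--Pick lemma to the self-map $\phi$ of the disk. Setting the second argument to the origin, Schwarz--Pick gives
\begin{align*}
	\left|\frac{\phi(z)-\phi(0)}{1-\overline{\phi(0)}\phi(z)}\right|\leq |z|,
\end{align*}
and by isolating $|\phi(z)|$ (the maximum is attained when $\phi(z)$ is aligned with $\phi(0)$ on a ray through the origin), one obtains the pointwise bound
\begin{align*}
	|\phi(z)|\leq \frac{|\phi(0)|+|z|}{1+|\phi(0)|\,|z|}.
\end{align*}
This is the only nontrivial input; everything afterwards is algebra.

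Next I would square this estimate and compute $1-|\phi(z)|^2$ from below. The key algebraic identity is
\begin{align*}
	(1+|\phi(0)|\,|z|)^2-(|\phi(0)|+|z|)^2 = (1-|\phi(0)|^2)(1-|z|^2),
\end{align*}
which immediately yields
\begin{align*}
	1-|\phi(z)|^2 \geq \frac{(1-|\phi(0)|^2)(1-|z|^2)}{(1+|\phi(0)|\,|z|)^2}.
\end{align*}
Dividing the squared Schwarz--Pick bound by this lower bound, the factors $(1+|\phi(0)|\,|z|)^2$ cancel and leave
\begin{align*}
	\frac{|\phi(z)|^2}{1-|\phi(z)|^2}\leq \frac{(|\phi(0)|+|z|)^2}{(1-|\phi(0)|^2)(1-|z|^2)}.
\end{align*}

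To reach the exact form stated in Lemma A, I would then weaken the denominator using $1-|\phi(0)|^2=(1-|\phi(0)|)(1+|\phi(0)|)\geq (1-|\phi(0)|)^2$, which reverses as $1/(1-|\phi(0)|^2)\leq 1/(1-|\phi(0)|)^2$ and so enlarges the right-hand side, preserving the inequality. This gives exactly the bound in the statement (reading $\phi(0)$ in the numerator as $|\phi(0)|$, which appears to be a typographical slip).

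The only place where a subtle point arises is in the step deriving the pointwise Schwarz--Pick bound on $|\phi(z)|$: one must check that the extremal configuration for maximizing $|\phi(z)|$ subject to the Möbius inequality is indeed achieved by aligning $\phi(z)$ and $\phi(0)$. This is standard but worth stating explicitly. Apart from this, the argument is a short chain of elementary manipulations with no serious obstacle.
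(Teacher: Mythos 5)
The paper does not actually prove Lemma~A --- it is imported verbatim from the proof of Theorem~6 in the cited reference --- so there is no internal argument to compare against. Your proof is correct and is the standard route: the Schwarz--Pick bound $|\phi(z)|\leq(|\phi(0)|+|z|)/(1+|\phi(0)|\,|z|)$ together with the identity $(1+ab)^2-(a+b)^2=(1-a^2)(1-b^2)$ yields the sharper estimate with $1-|\phi(0)|^2$ in the denominator, and your final weakening $(1-|\phi(0)|)^2\leq 1-|\phi(0)|^2$ runs in the correct direction to recover the (typographically garbled) form printed in the statement. The only implicit step worth spelling out is that you are dividing an upper bound for $|\phi(z)|^2$ by a positive lower bound for $1-|\phi(z)|^2$ (equivalently, using the monotonicity of $t\mapsto t^2/(1-t^2)$ on $[0,1)$), which is legitimate since $|\phi(z)|<1$; apart from that the argument is complete.
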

We derive results that provide sharp bounds for the pre-Schwarzian norm in terms of the value $f^{\prime\prime}(0)$ for the function $f\in\mathcal{F}_0(\lambda)$, particularly under the assumption $f^{\prime\prime}(0)=0$.
\begin{theorem}\label{Th-4.3}
 If $h\in\mathcal{F}_0(\lambda)$, then for all $z\in\mathbb{D}$ and $\frac{1}{2}\leq\lambda\leq1$, then
	\begin{align}
		||Pf||\leq(1+2\lambda).
	\end{align}
	The inequality is sharp.
\end{theorem}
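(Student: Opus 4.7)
The plan is to exploit the representation established in the proof of Theorem \ref{Th-4.1}. From equation \eqref{Eq-4.2}, any $f \in \mathcal{F}_0(\lambda)$ satisfies
\begin{align*}
\frac{f''(z)}{f'(z)} = \frac{(1+2\lambda)\,\phi(z)}{1 - z\,\phi(z)},
\end{align*}
for some analytic $\phi:\mathbb{D}\to\overline{\mathbb{D}}$. Since $f'(0)=1$, evaluating at $z=0$ gives $f''(0) = (1+2\lambda)\,\phi(0)$, so the hypothesis $f''(0)=0$ (implicit in this section) forces $\phi(0)=0$; hence $\phi$ is a Schwarz function.

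Next, I would apply Lemma A with $\phi(0)=0$, which reduces to the classical Schwarz inequality $|\phi(z)|\le|z|$. Combining this with the trivial lower bound $|1-z\phi(z)| \ge 1 - |z|\,|\phi(z)|$ and the fact that $t\mapsto t/(1-|z|t)$ is increasing on $[0,1/|z|)$, I would estimate
\begin{align*}
\left|\frac{f''(z)}{f'(z)}\right| \;\le\; \frac{(1+2\lambda)\,|\phi(z)|}{1-|z|\,|\phi(z)|} \;\le\; \frac{(1+2\lambda)\,|z|}{1-|z|^2}.
\end{align*}
Multiplying by $(1-|z|^2)$ and taking the supremum over $z\in\mathbb{D}$ produces the desired bound
\begin{align*}
\|Pf\| = \sup_{z\in\mathbb{D}} (1-|z|^2)\left|\frac{f''(z)}{f'(z)}\right| \;\le\; \sup_{z\in\mathbb{D}} (1+2\lambda)|z| \;=\; 1+2\lambda.
\end{align*}

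For the sharpness statement, I would choose the extremal Schwarz function $\phi(z)=z$, equivalently $\omega(z)=z^2$ in \eqref{Eq-4.1}. Direct integration of $f''/f' = (1+2\lambda)z/(1-z^2)$ (with $f(0)=0$, $f'(0)=1$) yields
\begin{align*}
f_\lambda(z) \;=\; \int_0^z \frac{du}{(1-u^2)^{(1+2\lambda)/2}}.
\end{align*}
One checks quickly that $1 + zf_\lambda''(z)/f_\lambda'(z) = (1+2\lambda z^2)/(1-z^2)$ is indeed the subordinating function on the right-hand side of \eqref{Eq-4.1}, so $f_\lambda \in \mathcal{F}_0(\lambda)$, and along $z=r\in(0,1)$,
\begin{align*}
(1-r^2)\,|Pf_\lambda(r)| \;=\; (1+2\lambda)\,r \;\longrightarrow\; 1+2\lambda \quad \text{as } r\to 1^-.
\end{align*}
Thus $\|Pf_\lambda\| = 1+2\lambda$, proving sharpness.

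There is no real obstacle: the argument is a two-line Schwarz-lemma estimate once one has the representation \eqref{Eq-4.2}. The only subtle point is recognising that the hypothesis $f''(0)=0$ is precisely what converts the bound $|\phi(z)|\le 1$ into $|\phi(z)|\le|z|$, turning the weaker estimate $\|Pf\|\le 2(1+2\lambda)$ into the sharp estimate $\|Pf\|\le 1+2\lambda$; for the sharpness half, the verification that $f_\lambda$ lies in $\mathcal{F}_0(\lambda)$ reduces to rewriting $\omega(z)=z^2$ in the form $\phi(z)=z$, which is consistent with Schwarz.
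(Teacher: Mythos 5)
Your argument is correct and essentially identical to the paper's: both exploit the representation \eqref{Eq-4.2}, use the hypothesis $f''(0)=0$ to get $\phi(0)=0$ and hence $|\phi(z)|\le|z|$ (the paper phrases this as $\phi(z)=z\xi_1(z)$ with $|\xi_1|\le 1$), bound the denominator below by $1-|z|^2$, and exhibit the same extremal function $f_\lambda(z)=\int_0^z(1-\xi^2)^{-(1+2\lambda)/2}\,d\xi$. Your write-up is in fact slightly more careful than the paper's, since you explicitly verify that $f_\lambda\in\mathcal{F}_0(\lambda)$ via $\omega(z)=z^2$ and make the implicit assumption $f''(0)=0$ visible.
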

\begin{proof}[\bf Proof of Theorem \ref{Th-4.3}]
	Since $\phi_1(z)=z\xi_1(z)$, with $|\xi_1(z)|<1$, then in \eqref{Eq-4.2}, we obtain
	\begin{align*}
		\sup_{z\in\mathbb{D}}(1-|z|^2)\bigg|\frac{f^{\prime\prime}(z)}{f^{\prime}(z)}\bigg|&\leq\sup_{z\in\mathbb{D}}(1-|z|^2)\frac{(1+2\lambda)|z\xi_1(z)|}{1-|z|^2|\xi_1(z)|}\\&\leq(1+2\lambda) \sup_{0\leq r\leq1}\frac{r(1-r^2)}{(1-r^2)}\\&=(1+2\lambda).
	\end{align*}
	Thus we desired bound is established. \vspace{1.2mm}
	
	The next part of the proof is to show that the bound is sharp. Henceforth, we consider the extremal function $f^*(z)$ is given by
	\begin{align*}
		f^*(z)=\int_{0}^{z}\frac{1}{(1-\xi^2)^{\frac{1+2\lambda}{2}}} d\xi \;\;\mbox{for all}\;\;\frac{1}{2}\leq\lambda\leq 1.
	\end{align*}
	It can be easily shown that $||Pf^*||=(1+2\lambda)$.
\end{proof}
From Theorem \ref{Th-4.3}, we obtain the following immediate result for the class $\mathcal{C}$ of convex functions.
\begin{corollary}
		If $f\in\mathcal{F}_0(1/2)=\mathcal{C}$ then for all $z\in\mathbb{D}$, we have
	\begin{align*}
		||Pf||\leq 2.
	\end{align*}
	The bound is sharp.
	
\end{corollary}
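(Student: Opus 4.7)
The plan is to reduce the corollary to Theorem \ref{Th-4.3} by specializing the parameter. At $\lambda=1/2$ the defining inequality \eqref{Eq-1.3} becomes $\mathrm{Re}(1+zf''(z)/f'(z))>0$, which is exactly the definition of convexity, confirming the identification $\mathcal{F}_0(1/2)=\mathcal{C}$ already recorded in the introduction. Substituting $\lambda=1/2$ into the bound $\|Pf\|\leq 1+2\lambda$ of Theorem \ref{Th-4.3} immediately yields $\|Pf\|\leq 2$ for every $f\in\mathcal{C}$.

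For sharpness I would specialize the extremal function $f^*$ from the proof of Theorem \ref{Th-4.3} to $\lambda=1/2$, namely
\begin{align*}
f^*(z)=\int_{0}^{z}\frac{d\xi}{1-\xi^2}=\frac{1}{2}\log\frac{1+z}{1-z},
\end{align*}
which is the familiar conformal map of $\mathbb{D}$ onto a horizontal strip and is therefore convex. A direct calculation gives $f^{*\prime}(z)=1/(1-z^2)$ and hence $Pf^*(z)=2z/(1-z^2)$. Evaluating on the positive real axis yields $(1-r^2)|Pf^*(r)|=2r\to 2$ as $r\to 1^-$, matching the upper bound.

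Since the corollary is a pure specialization of Theorem \ref{Th-4.3}, there is essentially no obstacle; the only bookkeeping is checking that $f^*$ really lies in $\mathcal{C}$ at the boundary value $\lambda=1/2$, which follows from the identity $\mathrm{Re}\bigl((1+z^2)/(1-z^2)\bigr)=(1-|z|^4)/|1-z^2|^2>0$ on $\mathbb{D}$. Thus both the inequality and its sharpness transfer verbatim from the general theorem.
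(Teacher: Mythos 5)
Your proposal is correct and follows essentially the same route as the paper: the inequality is obtained by substituting $\lambda=1/2$ into Theorem \ref{Th-4.3}, and sharpness is verified with the same extremal function $f(z)=\tfrac{1}{2}\log\frac{1+z}{1-z}$, whose pre-Schwarzian $2z/(1-z^2)$ gives $(1-r^2)|Pf(r)|=2r\to 2$. Your explicit check that this function is convex (via $\mathrm{Re}\left((1+z^2)/(1-z^2)\right)>0$) is a small but welcome addition that the paper leaves implicit.
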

\subsection*{Sharpness of Corolary 4.3 :}
For $\lambda=1/2$, it follows from \eqref{Eq-4.32} that

\begin{align*}
	\frac{f^{\prime\prime}_\frac{1}{2}}{f_\frac{1}{2}^{\prime}}(z)=\frac{2z}{1-z^2}\;\;\mbox{and}\;\;Pf_\frac{1}{2}=\frac{2z}{1-z^2}.
\end{align*}
A simple computation thus yields that
\begin{align*}
	||Pf_\frac{1}{2}||=\sup_{z\in\mathbb{D}}\left(1-z^2\right)|Pf_\frac{1}{2}|=\sup_{z\in\mathbb{D}}\left(1-|z|^2\right)\frac{2|z|}{1-|z|^2}=2
\end{align*}
  and we see the constant $2$ is sharp.
\begin{remark}
	If $f\in\mathcal{F}_0(\frac{1}{2})=\mathcal{C}$, then for all $z\in\mathbb{D}$, we have
	\begin{align*}
		(1-|z|^2)\bigg|\frac{h_1^{\prime\prime}(z)}{h_1^{\prime}(z)}\bigg|\leq2|z|.
	\end{align*}
\end{remark}
Our next result gives a sharp bound for the norm of the Schwarzian derivative when $f\in \mathcal{F}_0(\lambda)$ by a direct application of the Schwarz lemma.
\vspace*{2mm}
\begin{theorem}\label{Th-4.4}
		If $f\in\mathcal{F}_0(\lambda)$, for all $z\in\mathbb{D}$ and $\frac{1}{2}\leq\lambda\leq1$, then the Schwarzian norm 
		\begin{align*}
			||Sf_\lambda||=(1-|z|^2)^2|Sh(z)|\leq\frac{(1+2\lambda)(3-2\lambda)}{2}.
		\end{align*}
		The inequality is sharp.
	\end{theorem}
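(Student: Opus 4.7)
The natural plan is to mirror the proof of Theorem \ref{Th-4.3}, pushing the computation one derivative further. First, I would use the equivalent formulation from Theorem \ref{Th-4.1} together with the standing assumption $f''(0)=0$ (introduced in the paragraph just before Lemma A) to write
\[ \frac{f''(z)}{f'(z)} = \frac{(1+2\lambda)\phi(z)}{1-z\phi(z)}, \]
where $\phi:\mathbb{D}\to\overline{\mathbb{D}}$ is analytic with $\phi(0)=0$. A direct differentiation, combined with the definition $S_f = (f''/f')' - \tfrac12 (f''/f')^2$, yields the compact identity
\[ S_f(z) = \frac{(1+2\lambda)}{(1-z\phi(z))^2}\left(\phi'(z) + \frac{1-2\lambda}{2}\phi(z)^2\right). \]

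Next, I would estimate $(1-|z|^2)^2|S_f(z)|$ by controlling each factor using the constraints on $\phi$. From $\phi(0)=0$ and the Schwarz lemma, $|\phi(z)|\leq|z|$, hence $|1-z\phi(z)|\geq 1-|z|^2$. The Schwarz--Pick inequality yields $(1-|z|^2)|\phi'(z)|\leq 1-|\phi(z)|^2$, and Lemma A provides additional control of $|\phi(z)|^2/(1-|\phi(z)|^2)$ in the second summand. Combining these into a single estimate of the form
\[ (1-|z|^2)^2|S_f(z)| \leq (1+2\lambda)\, G(|z|,|\phi(z)|), \]
I would then carry out the elementary maximization of $G$ over the admissible region $|\phi|\leq|z|<1$, aiming to arrive at the constant $(1+2\lambda)(3-2\lambda)/2$.

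For sharpness, the natural candidate is the function $f^{*}(z)=\int_{0}^{z}(1-\xi^{2})^{-(1+2\lambda)/2}\,d\xi$ introduced in the proof of Theorem \ref{Th-4.3}, which corresponds to the choice $\phi(z)=z$. A direct computation gives
\[ S_{f^{*}}(z) = \frac{(1+2\lambda)\left(1 + \tfrac{1-2\lambda}{2}z^{2}\right)}{(1-z^{2})^{2}}, \]
so that along the positive real axis, $(1-r^{2})^{2}|S_{f^{*}}(r)| \to \tfrac{(1+2\lambda)(3-2\lambda)}{2}$ as $r\to 1^{-}$, matching the upper bound.

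The step I expect to be the main obstacle is the two variable maximization of $G$. A crude application of the triangle inequality to $\phi' + \tfrac{1-2\lambda}{2}\phi^{2}$ discards the interplay between the two summands, in particular the sign information carried by the coefficient $\tfrac{1-2\lambda}{2}\leq 0$ for $\lambda\in[1/2,1]$; this typically produces a larger constant than $3-2\lambda$. Recovering exactly the factor $(3-2\lambda)/2$ seen on the extremal will therefore require either a careful case analysis coupled with monotonicity in $|\phi(z)|$, or a Schwarz--Pick type estimate applied to the combined expression $\phi' + \tfrac{1-2\lambda}{2}\phi^{2}$ rather than to its summands individually.
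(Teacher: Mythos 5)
Your setup coincides exactly with the paper's: the representation $f''/f' = (1+2\lambda)\phi/(1-z\phi)$, the identity $S_f = \frac{1+2\lambda}{(1-z\phi)^2}\bigl(\phi' + \frac{1-2\lambda}{2}\phi^2\bigr)$, the Schwarz--Pick estimate, Lemma A, the auxiliary function $\Phi=(\bar z-\phi)/(1-z\phi)$ implicit in your factor $G$, and the extremal $f_\lambda$ are all what the paper uses. The genuine gap is the maximization step you deferred, and the caveat you attach to it is precisely where the argument cannot be repaired. A correct triangle inequality gives $\bigl|\phi' + \frac{1-2\lambda}{2}\phi^2\bigr| \le |\phi'| + \frac{2\lambda-1}{2}|\phi|^2$ (the modulus of the coefficient, since $\frac{1-2\lambda}{2}\le 0$ on $[1/2,1]$), which after the paper's remaining steps yields the constant $(1+2\lambda)\bigl(1+\frac{2\lambda-1}{2}\bigr) = \frac{(1+2\lambda)^2}{2}$, not $\frac{(1+2\lambda)(3-2\lambda)}{2}$. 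The paper reaches the smaller constant only by writing $|\phi'+c\phi^2|\le|\phi'|+c|\phi|^2$ with the signed $c=\frac{1-2\lambda}{2}<0$; that inequality is false in general, so the published proof is broken at exactly the point where you anticipated trouble.

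In fact no argument can reach the stated constant, because the theorem is false for $\lambda\in(1/2,1]$: the extremal function itself violates the bound. From $S_{f_\lambda}(z) = \frac{(1+2\lambda)}{(1-z^2)^2}\bigl(1+\frac{1-2\lambda}{2}z^2\bigr)$ one gets, at $z=0$, $(1-|z|^2)^2|S_{f_\lambda}(0)| = 1+2\lambda$, which strictly exceeds $\frac{(1+2\lambda)(3-2\lambda)}{2}$ whenever $\lambda>1/2$; hence $\|S_{f_\lambda}\|\ge 1+2\lambda$. Your sharpness check examines only the radial limit $r\to1^-$, where the quantity does tend to $\frac{(1+2\lambda)(3-2\lambda)}{2}$, but for $\lambda>1/2$ that limit is not the supremum over $\mathbb{D}$ (the paper's sharpness computation commits the same oversight, compounded by writing $|z|^2$ where $z^2$ should appear). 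The statement is correct only at $\lambda=1/2$, where both candidate constants equal $2$ and the result reduces to Nehari's bound for convex functions; for $\lambda>1/2$ your plan, like the paper's proof, necessarily fails, and any true bound must be at least $1+2\lambda$.
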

	\begin{proof}[\bf Proof of Theorem \ref{Th-4.4}]
		From \eqref{Eq-4.2}, we have
		\begin{align*}
			\left(\frac{f^{\prime\prime}(z)}{f^{\prime}(z)}\right)=\frac{(1+2\lambda)\phi(z)}{(1-z\phi(z))}
		\end{align*}
		A simple calculation shows that
		\begin{align}
			Sf(z)=(1+2\lambda)\frac{\phi^{\prime}(z)+(\frac{1-2\lambda}{2})\phi^2(z)}{(1-z\phi(z))^2}.
		\end{align}
		By using triangle inequality and Schwarz pick lemma, we obtain
		\begin{align}\label{Eq-4.20}
			(1-|z|^2)^2|Sf|&\leq(1+2\lambda)\bigg|\phi^{\prime}(z)+\left(\frac{1-2\lambda}{2}\right)\phi^2(z)\bigg|\frac{(1-|z|^2)^2}{|1-z\phi(z)|^2}\\&=\nonumber\frac{(1+2\lambda)(1-|z|^2)^2}{|1-z\phi(z)|^2}\left(\frac{1-|\phi(z)|^2}{1-|z|^2}+\left(\frac{1-2\lambda}{2}\right)|\phi(z)|^2\right).
		\end{align}
		We define the function $\Phi(z):\mathbb{D}\rightarrow\mathbb{D}$ such that
		\begin{align}
			\Phi(z)=\frac{\bar{z}-\phi(z)}{1-z\phi(z)}.
		\end{align}
		Since $\phi(\mathbb{D})\subseteq\mathbb{D}$, then we have $(1-|z|^2)(1-|z\phi(z)|^2)>0$, hence it follows that 
		\begin{align}
			|\bar{z}-\phi(z)|^2<|1-z\phi(z)|^2.
		\end{align}
		Thus, we conclude that $|\Phi(z)|^2<1$. \vspace{2mm}
		
		Consequently, we have the estimates
		\begin{align}
			1-|\Phi(z)|^2=\frac{(1-|\phi(z)|^2)(1-|z|^2)}{|1-z\phi(z)|^2}
		\end{align}
		and
		\begin{align}\label{Eq-4.24}
			\frac{(1-|z|^2)^2}{|1-z\phi(z)|^2}=\frac{(1-|\Phi(z)|^2)(1-|z|^2)}{(1-|\phi(z)|^2)}.
		\end{align}
		If we replace the expression \eqref{Eq-4.24} in \eqref{Eq-4.20}, we have
		\begin{align}\label{Eq-4.21}
			(1-|z|^2)^2|Sf(z)|\leq(1+2\lambda)(1-|\Phi(z)|^2)\left(1+(\frac{1-2\lambda}{2})\frac{|\phi(z)|^2(1-|z|^2)}{(1-|\phi(z)|^2)}\right).
		\end{align}
		Since $f^{\prime\prime}(0)=0$ implies that $\phi(0)=0$, using Lemma A, we easily obtain that
		\begin{align}\label{Eq-4.26}
			\frac{|\phi(z)|^2}{1-|\phi(z)|^2}\leq\frac{|z|^2}{1-|z|^2}.
		\end{align}
		In view of  \eqref{Eq-4.26} and \eqref{Eq-4.21}, we obtain
		\begin{align}
			(1-|z|^2)^2|Sf(z)|\leq(1+2\lambda)(1-|\Phi(z)|^2)\left(1+(\frac{1-2\lambda}{2})|z|^2\right).
		\end{align}
		Also, becouse $1-|\Phi(z)|^2$$\leq$1, we see that
		\begin{align}
				(1-|z|^2)^2|Sf(z)|&\leq(1+2\lambda)\left(1+\frac{1-2\lambda}{2}\right)\\&\nonumber=\frac{(1+2\lambda)(3-2\lambda)}{2}.
		\end{align}
		Thus our desired inequality is established.\vspace{1.2mm} 
		
		Next part of the proof is to show the sharpness of the inequality.	Henceforth, we consider the function $f_\lambda (z)$ is given by
	\begin{align}
		f_\lambda (z)=\int_{0}^{z}\frac{1}{(1-\xi^2)^{\frac{1+2\lambda}{2}}} d\xi, \;\;\;\;\;\mbox{for all} \;\frac{1}{2}\leq\lambda\leq1
	\end{align}
	maximizes the Schwarzian norm defined as:
	\begin{align*}
		||Sf||=\sup_{z\in\mathbb{D}}(1-|z|^2)^2|Sf| 
	\end{align*}
	and from this, the sharpness of the inequality holds for $\frac{1}{2}\leq\lambda\leq1$.\vspace{2mm} 
	
	Note that
	\begin{align}\label{Eq-4.32}
		\frac{f^{\prime\prime}_\lambda}{f_\lambda^{\prime}}(z)=\frac{(1+2\lambda)z}{1-z^2}\;\;\mbox{and}\;\;Sf_\lambda=\frac{(1+2\lambda)}{(1-z^2)^2}\left[1+\left(\frac{1-2\lambda}{2}\right)|z|^2\right]
	\end{align}
	which calculates
	\begin{align*}
		||Sf_\lambda||=\sup_{z\in\mathbb{D}}(1-|z|^2)^2|Sf_\lambda|=(1+2\lambda)\frac{(3-2\lambda)}{2}.
	\end{align*}
	In general, the integral formula for $f_\lambda$ given in above does not give primitives in terms of elementary functions, however when $\lambda=1/2$, we see that
	\begin{align*}
			f_\frac{1}{2} (z)=\int_{0}^{z}\frac{1}{(1-\xi^2)} d\xi=\frac{1}{2}\log\left(\frac{1+z}{1-z}\right),
	\end{align*}
	where $||Sf_\frac{1}{2}||=2$.
		\end{proof}
For the class $\mathcal{C}$, we have the an immediate result from Theorem \ref{Th-4.4}.
\begin{corollary}
		If $f\in\mathcal{F}_0(\frac{1}{2})=\mathcal{C}$, then for all $z\in\mathbb{D}$, we have
		\begin{align*}
			||Sf_\frac{1}{2}||\leq2.
		\end{align*}
		The inequality is sharp.
\end{corollary}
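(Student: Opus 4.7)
\medskip

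\noindent\textbf{Proof proposal.} The plan is to deduce this corollary directly from Theorem \ref{Th-4.4} by specializing the parameter $\lambda$. Since $\mathcal{C} = \mathcal{F}_0(1/2)$ by the identification made earlier in the paper, any $f\in\mathcal{C}$ automatically lies in $\mathcal{F}_0(\lambda)$ at $\lambda=1/2$, so Theorem \ref{Th-4.4} applies and yields
\begin{align*}
\|Sf_{1/2}\| \;\leq\; \frac{(1+2\lambda)(3-2\lambda)}{2}\bigg|_{\lambda=1/2} \;=\; \frac{2\cdot 2}{2} \;=\; 2.
\end{align*}
So the bound itself requires no new work beyond substitution.

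For sharpness, I would take the extremal function exhibited at the end of the proof of Theorem \ref{Th-4.4}, namely
\begin{align*}
f_{1/2}(z) \;=\; \int_0^z \frac{d\xi}{1-\xi^2} \;=\; \frac{1}{2}\log\!\left(\frac{1+z}{1-z}\right),
\end{align*}
and verify directly that $\|Sf_{1/2}\|=2$. Using formula \eqref{Eq-4.32} at $\lambda=1/2$, I get $Sf_{1/2}(z) = 2/(1-z^2)^2$, so
\begin{align*}
(1-|z|^2)^2 |Sf_{1/2}(z)| \;=\; \frac{2(1-|z|^2)^2}{|1-z^2|^2}.
\end{align*}
Evaluating along the real segment $z=r\in(0,1)$ gives the value $2$ identically, so the supremum equals $2$ and the bound is attained.

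There is essentially no obstacle here: the corollary is a direct specialization. The only minor thing to double-check is that $f_{1/2}$ indeed belongs to $\mathcal{C}$, which is standard (its image is a horizontal strip, a convex domain), so it is a legitimate extremal inside the subclass $\mathcal{C}$ and not just inside the larger class $\mathcal{F}_0(1/2)$. With that verification, the proof is complete.
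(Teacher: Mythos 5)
Your proposal is correct and matches the paper's treatment: the paper likewise obtains the bound by substituting $\lambda=1/2$ into Theorem \ref{Th-4.4} and establishes sharpness via the same extremal function $f_{1/2}(z)=\tfrac{1}{2}\log\bigl(\tfrac{1+z}{1-z}\bigr)$, for which $Sf_{1/2}(z)=2/(1-z^2)^2$ gives $\|Sf_{1/2}\|=2$. Your added remark that $f_{1/2}$ maps $\mathbb{D}$ onto a convex strip, hence genuinely lies in $\mathcal{C}$, is a sensible verification that the paper leaves implicit.
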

For the value $|f^{\prime\prime}(0)|$ which is not necessarily zero, we will give a bound for the quantity $(1-|z|^2)^2|Sf(z)|$ with $f\in \mathcal{F}_0(\lambda)$ in the following.
\begin{theorem}\label{Th-4.5}
	If $f\in\mathcal{F}_0(\lambda)$, for all $z\in\mathbb{D}$ and $\frac{1}{2}\leq\lambda\leq1$ and $\gamma=|\phi(0)|=\frac{|f^{\prime\prime}(0)|}{(1+2\lambda)}$, then 
	\begin{align*}
		(1-|z|^2)^2|Sf(z)|\leq(1+2\lambda)\left(1+\frac{1-2\lambda}{2}\frac{1+\gamma}{1-\gamma}\right).
	\end{align*}
\end{theorem}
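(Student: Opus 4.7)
The plan is to follow the proof of Theorem~\ref{Th-4.4} step by step up to the point at which the assumption $\phi(0)=0$ was invoked, and then replace the specialized application of Lemma~A by its general version allowing $\phi(0)\neq 0$. Evaluating \eqref{Eq-4.2} at $z=0$ yields $f''(0)=(1+2\lambda)\phi(0)$, so the identification $\gamma=|\phi(0)|=|f''(0)|/(1+2\lambda)$ is consistent with the hypothesis.

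First, I would reproduce the derivation leading to \eqref{Eq-4.21}, namely
$$(1-|z|^2)^2|Sf(z)|\leq(1+2\lambda)(1-|\Phi(z)|^2)\left(1+\frac{1-2\lambda}{2}\cdot\frac{|\phi(z)|^2(1-|z|^2)}{1-|\phi(z)|^2}\right),$$
where $\Phi(z)=(\bar z-\phi(z))/(1-z\phi(z))$. This inequality uses only the Schwarz--Pick estimate for $\phi$ and the identity $(1-|\phi(z)|^2)(1-|z|^2)/|1-z\phi(z)|^2=1-|\Phi(z)|^2$, neither of which depends on the value of $\phi(0)$. I would then use $|\Phi(z)|<1$ to drop the factor $1-|\Phi(z)|^2$, reducing the problem to bounding the quotient $|\phi(z)|^2(1-|z|^2)/(1-|\phi(z)|^2)$.

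At this point, the general form of Lemma~A with $|\phi(0)|=\gamma$ provides
$$\frac{|\phi(z)|^2}{1-|\phi(z)|^2}\leq\frac{(\gamma+|z|)^2}{(1-\gamma^2)(1-|z|^2)},$$
so that
$$\frac{|\phi(z)|^2(1-|z|^2)}{1-|\phi(z)|^2}\leq\frac{(\gamma+|z|)^2}{1-\gamma^2}.$$
Substituting back yields the pointwise estimate
$$(1-|z|^2)^2|Sf(z)|\leq(1+2\lambda)\left(1+\frac{1-2\lambda}{2}\cdot\frac{(\gamma+|z|)^2}{1-\gamma^2}\right),$$
and letting $|z|\to 1^-$, together with the algebraic identity $(1+\gamma)^2/(1-\gamma^2)=(1+\gamma)/(1-\gamma)$, produces the claimed bound.

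The main technical point to be careful about is the sign handling in the last step, since $(1-2\lambda)/2$ is nonpositive for $\lambda\in[1/2,1]$; one must verify that the passage from the $|z|$-dependent expression $(\gamma+|z|)^2/(1-\gamma^2)$ to the limiting value $(1+\gamma)/(1-\gamma)$ is performed in the direction that preserves the upper bound, exactly as in the treatment of Theorem~\ref{Th-4.4}. Beyond this bookkeeping, the argument is a direct adaptation of the $\phi(0)=0$ case, and the clean simplification $(1+\gamma)^2/(1-\gamma^2)=(1+\gamma)/(1-\gamma)$ is the key algebraic identity that gives the bound its stated form.
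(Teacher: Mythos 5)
Your proposal reproduces the paper's own proof of Theorem \ref{Th-4.5} essentially verbatim: reuse \eqref{Eq-4.21}, apply Lemma A in its general form with $\gamma=|\phi(0)|$, drop the factor $1-|\Phi(z)|^2$, and let $|z|\to1^-$ using $(1+\gamma)^2/(1-\gamma^2)=(1+\gamma)/(1-\gamma)$. However, the ``technical point'' you flag and then defer is not bookkeeping; it is a genuine gap, and it cannot be settled ``exactly as in the treatment of Theorem \ref{Th-4.4}'' because the identical wrong-direction step occurs there as well. Since $\frac{1-2\lambda}{2}\le 0$ for $\lambda\in[\frac{1}{2},1]$, the map $t\mapsto 1+\frac{1-2\lambda}{2}\,t$ is decreasing, so replacing $t=\frac{(\gamma+|z|)^2}{1-\gamma^2}$ by its supremum $\frac{1+\gamma}{1-\gamma}$ \emph{decreases} the right-hand side and does not preserve the upper bound. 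The same sign problem already infects the starting point: the passage to the second line of \eqref{Eq-4.20} uses the triangle inequality with the signed coefficient $\frac{1-2\lambda}{2}$ where $\bigl|\frac{1-2\lambda}{2}\bigr|=\frac{2\lambda-1}{2}$ is required, so for $\lambda>\frac{1}{2}$ the inequality \eqref{Eq-4.21} on which your argument (and the paper's) rests is itself not established.

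Carrying the positive coefficient $\frac{2\lambda-1}{2}$ through instead makes every subsequent majorization go the right way, but it produces the constant $(1+2\lambda)\left(1+\frac{2\lambda-1}{2}\cdot\frac{1+\gamma}{1-\gamma}\right)$, not the stated one. In fact the stated bound is not merely unproved but false in general: take $\lambda=1$ and $\phi(z)=z$, i.e.\ $f=f_1$ with $f_1'(z)=(1-z^2)^{-3/2}$, which lies in $\mathcal{F}_0(1)$ and has $\gamma=0$; then $Sf(0)=(1+2\lambda)\phi'(0)=3$, so $(1-|z|^2)^2|Sf(z)|$ equals $3$ at $z=0$, whereas the claimed bound is $(1+2\lambda)\bigl(1+\frac{1-2\lambda}{2}\bigr)=\frac{3}{2}$. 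So the step you singled out as needing verification is precisely where the argument --- yours and the paper's alike --- breaks down, and it cannot be repaired without changing the constant in the statement.
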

\begin{proof}[\bf Proof of Theorem \ref{Th-4.5}]
	Let $\gamma=|\phi(0)|$. In view of the Lemma A, we  see that
	\begin{align}\label{Eq-4.27}
			\frac{|\phi(z)|^2}{1-|\phi(z)|^2}\leq\frac{(\gamma+|z|)^2}{(1-\gamma^2)(1-|z|^2)}.
	\end{align}
	If we substitute in \eqref{Eq-4.27} in \eqref{Eq-4.21}, then we obtain
	\begin{align*}
		(1-|z|^2)^2|Sf(z)|\leq(1+2\lambda)(1-|\Phi(z)|^2)\left(1+(\frac{1-2\lambda}{2})\frac{(\gamma+|z|)^2}{(1-\gamma^2)}\right).
	\end{align*}
	From the fact that $|z|<1$ and  $1-|\Phi(z)|^2\leq1$, we easily calculate that 
	\begin{align*}
		(1-|z|^2)^2|Sf(z)|\leq(1+2\lambda)\left(1+\frac{1-2\lambda}{2}\frac{1+\gamma}{1-\gamma}\right)
	\end{align*}
	which is the required inequality. This completes the proof.
\end{proof}
\section{\textbf{Schwarzian derivatives for Harmonic mapping with fixed analytic part}}\label{Sec-3}
In this article, we are concerned with a class of functions $\mathcal{G}(\beta)$, $\beta>0$, defined by
\begin{align*}
	\mathcal{G}(\beta)=\bigg\{f\in\mathcal{A}:{\rm Re}\left(1+\frac{zf^{\prime\prime}(z)}{f^{\prime}(z)}\right)<1+\frac{\beta}{2}\;\;\mbox{for}\;z\in\mathbb{D}\bigg\}.
\end{align*}
In $1941$, Ozaki \cite{Ozaki-SRTB-1941} introduced the class $\mathcal{G}:=\mathcal{G}(1)$ and proved that functions in $\mathcal{G}$ are univalent in $\mathbb{D}$. Later on, Umezawa \cite{Umezawa-JMSJ-1952} studied the class $\mathcal{G}$ and showded that functions in $\mathcal{G}$ are convex in one direction, $\emph{i.e.,}$ every $f\in\mathcal{G}$ maps $|z|=\rho<r$ for every $\rho$ near $r$ into a contour which may be cut by every straight line parallel too this direction in not more than two points. Moreover, functions in $\mathcal{G}$ are starlike in $\mathbb{D}$ (see \cite{Jovanovi´c-Obradovi´c-F-1995,Ponnuswamy-Rajasekaran-SJM-1995} ) . Thus, the class $\mathcal{G}(\beta)$ is included in $\mathcal{S}^*$ whenever $\beta\in(0,1]$. One can easily show that functions in $\mathcal{G}(\beta)$ are not univalent in $\mathbb{D}$ for $\beta>1$. $0<\beta\leq2/3$, the class $\mathcal{G}(\beta)$ was studied by Uralegaddi \textit{et al.} in \cite{Uralegaddi-Ganigi-Sarangi-TJM-1994}. \vspace*{2mm}

In this section, our main aim to derive estimates for the modulus of the pre-Schwarzian norm for the functions in $\mathcal{G}(\beta),\;\beta>0$. These result will provide a precise estimate of the pre-Schwarzian norm for functions $\mathcal{G}(\beta)$.
\begin{theorem}\label{Th-3.1}
	If $f=h+\bar{g}\in\mathcal{G}(\beta)$, then $||P_{f}||\leq2\beta+1$. The estimate is best possible.
\end{theorem}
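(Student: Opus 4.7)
The plan is to apply the triangle inequality to the harmonic pre-Schwarzian formula
\[
P_f(z) \;=\; \frac{h''(z)}{h'(z)} \;-\; \frac{\omega'(z)\,\overline{\omega(z)}}{1-|\omega(z)|^2},
\]
where $\omega = g'/h'$ is the dilatation of $f$, and to bound each of the two summands (after multiplication by $(1-|z|^2)$) by $2\beta$ and $1$ respectively, so that their sum is at most $2\beta + 1$.

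For the dilatation contribution, since $\omega:\mathbb{D}\to\mathbb{D}$ is analytic, the Schwarz--Pick lemma gives $(1-|z|^2)|\omega'(z)| \le 1 - |\omega(z)|^2$, and hence
\[
(1-|z|^2)\left|\frac{\omega'(z)\,\overline{\omega(z)}}{1-|\omega(z)|^2}\right| \;\le\; |\omega(z)| \;<\; 1.
\]

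For the analytic piece, I would translate the defining condition ${\rm Re}\,(1 + zh''(z)/h'(z)) < 1+\beta/2$ of $\mathcal{G}(\beta)$ into a subordination. Setting $p(z):= zh''(z)/h'(z)$, one has $p(0) = 0$ and ${\rm Re}\,p(z) < \beta/2$ on $\mathbb{D}$. Since the M\"obius map $L(z):= -\beta z/(1-z)$ sends $\mathbb{D}$ conformally onto the half-plane $\{w:{\rm Re}\,w < \beta/2\}$ with $L(0)=0$, the subordination principle yields $p \prec L$; equivalently $p(z) = -\beta\,\sigma(z)/(1-\sigma(z))$ for some Schwarz function $\sigma$ with $\sigma(0)=0$. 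By the Schwarz lemma we may write $\sigma(z) = z\tau(z)$ with $|\tau(z)|\le 1$, producing
\[
\frac{h''(z)}{h'(z)} \;=\; \frac{-\beta\,\tau(z)}{1 - z\,\tau(z)}.
\]
Using $|1 - z\tau(z)| \ge 1 - |z|\,|\tau(z)|$ together with the monotonicity of $t\mapsto t/(1-|z|t)$ on $[0,1]$, this gives $|h''(z)/h'(z)| \le \beta/(1-|z|)$ and therefore $(1-|z|^2)|h''(z)/h'(z)| \le \beta(1+|z|) \le 2\beta$. Combining the two bounds proves $||P_f||\le 2\beta+1$.

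Sharpness will be demonstrated by taking $h_\beta(z) = \int_0^z (1-\zeta)^\beta\,d\zeta$ (for which $zh_\beta''(z)/h_\beta'(z) = -\beta z/(1-z)$ saturates the subordination; the membership $h_\beta\in \mathcal{G}(\beta)$ follows from ${\rm Re}\,(z/(1-z)) > -1/2$ on $\mathbb{D}$) paired with dilatation $\omega(z)=z$, i.e.\ $g_\beta(z)=\int_0^z \zeta(1-\zeta)^\beta\,d\zeta$. Along the real ray $z=r\in(0,1)$ both summands of $P_f(r)$ are real and of the same sign, so
\[
(1-r^2)|P_f(r)| \;=\; \beta(1+r) + r \;\longrightarrow\; 2\beta+1 \quad \text{as } r\to 1^-.
\]
I expect the main obstacle to be the subordination step: one must rigorously justify $p \prec L$ from ${\rm Re}\,p < \beta/2$ and $p(0)=0$, and extract the representation $h''/h' = -\beta\tau/(1-z\tau)$ in a form amenable to the modulus estimate. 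A secondary check is that the two extremals (for the analytic and dilatation parts) align along the same radial direction; this is automatic here because both factors concentrate as $z\to 1^-$.
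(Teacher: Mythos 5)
Your proof is correct, and the upper-bound half coincides with the paper's: both split $P_f$ by the triangle inequality, bound the analytic part by $\beta/(1-|z|)$ via the subordination $zh''(z)/h'(z)\prec -\beta z/(1-z)$ (the paper phrases the same fact as $1+zh''/h'\prec (1-(1+\beta)z)/(1-z)$), and bound the dilatation term by $|\omega(z)|<1$ via Schwarz--Pick; the step you flag as the ``main obstacle'' is just the standard subordination principle for a univalent target whose image is the half-plane $\{\operatorname{Re}w<\beta/2\}$, so there is nothing left to justify. Where you genuinely diverge is the sharpness argument. The paper takes the same extremal analytic part but pairs it with a one-parameter family of dilatations $\omega_t(z)=(z-t)/(1-tz)$, locates an interior critical point $r_0(t)$ of the resulting radial profile, and then lets $t\to 1$; you instead take the single function with $\omega(z)=z$ (the $t=0$ member of that family) and note that at $z=r$ the two contributions $-\beta/(1-r)$ and $-r/(1-r^2)$ have the same sign, so $(1-r^2)|P_f(r)|=\beta(1+r)+r\to 2\beta+1$. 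Your route is shorter, sidesteps the paper's critical-point computation (which is marred by a sign slip in $h''/h'$ and an unexplained restriction on $t$), and produces one explicit $f$ with $\|P_f\|$ exactly equal to $2\beta+1$, rather than a family of functions whose norms merely approach that value.
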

\begin{proof}[\bf Proof of Theorem \ref{Th-3.1}]
	Let, $f=h+\bar{g}\in\mathcal{G}(\beta)$. Then $f$ holds the subordination relation 
	\begin{align*}
		1+\frac{zf^{\prime\prime}(z)}{f^{\prime}(z)}\prec\frac{1-(1+\beta)z}{1-z},
	\end{align*}
	which gives us
	\begin{align}\label{Eq-5.2}
		\bigg|\frac{f^{\prime\prime}(z)}{f^{\prime}(z)}\bigg|\leq\frac{\beta}{1-|z|}.
	\end{align}
	By the Schwarz-Pick lemmma and \eqref{Eq-5.2}, we have 
	\begin{align*}
		||P_{f}||&=\sup_{z\in\mathbb{D}}\left(1-|z|^2\right)|P_{f}|\\&=\sup_{z\in\mathbb{D}}\left(1-|z|^2\right)\bigg|\frac{f^{\prime\prime}(z)}{f^{\prime}(z)}-\frac{\bar{\omega}(z)\omega^{\prime}(z)}{1-|\omega(z)|^2}\bigg|\\&\leq\sup_{z\in\mathbb{D}}\left(1-|z|^2\right)\left(\bigg|\frac{f^{\prime\prime}(z)}{f^{\prime}(z)}\bigg|+\bigg|\frac{\bar{\omega}(z)\omega^{\prime}(z)}{1-|\omega(z)|^2}\bigg|\right)\\&\leq\sup_{z\in\mathbb{D}}\left(1-|z|^2\right)\left(\frac{\beta}{1-|z|}+\frac{|\bar{\omega}(z)|}{1-|z|^2}\right)\\&\leq\sup_{z\in\mathbb{D}}\left(\beta(1+|z|)+|\omega(z)|\right)\\&=2\beta+1.
	\end{align*}
	We now show that the estimate is best possible. For
 \[ \begin{cases}
 	\sqrt{1-\beta}\leq t<1\;\;\;\mbox{when}\;\;\beta\in[0,1)\vspace{2mm}\\\dfrac{1-2\alpha}{3-2\alpha}\leq t<1\;\;\;\;\;\;\;\;\mbox{when}\;\;\beta\in[1,\infty),
 \end{cases}
 \]
	we consider the function $f=h+\bar{g}\in\mathcal{G}(\beta)$ with the second complex dilatation $\omega_t(z)=\frac{z-t}{1-tz}$ and $h(z)$ be such that
	\begin{align*}
		1+\frac{zh_1^{\prime\prime}(z)}{h_1^{\prime}(z)}=\frac{1-(1+\beta)z}{1-z}.
	\end{align*}
	Then clearly $f\in\mathcal{G(\beta)}$ for all t in $[0,\infty)$. A simple calculation gives that
	\begin{align*}
		\frac{\bar{\omega_t}(z)\omega_t(z)}{1-|\omega_t(z)|^2}=\frac{\bar{z}-t}{(1-tz)(1-|z|^2)}.
	\end{align*}
	Consequently, we have
	\begin{align*}
		||P_{f_t}||&=\sup_{z\in\mathbb{D}}(1-|z|^2)\bigg|\frac{h^{\prime\prime}_1(z)}{h^{\prime}_1(z)}-\frac{\bar{\omega}(z)\omega^{\prime}(z)}{1-|\omega(z)|^2}\bigg|\\&=\sup_{z\in\mathbb{D}}(1-|z|^2)\bigg|\frac{\beta}{1-|z|}-\frac{\bar{z}-t}{(1-tz)(1-|z|^2)}\bigg|.
	\end{align*}
	We set
	\begin{align*}
		M_t=\sup_{z\in\mathbb{D}}(1-|z|^2)\bigg|\frac{\beta}{1-|z|}-\frac{\bar{z}-t}{(1-tz)(1-|z|^2)}\bigg|
	\end{align*}
	\begin{align}\label{Eq-5.4}
		=\sup_{z\in[0,1)}\bigg|\beta(1+r)-\frac{r-t}{1-tr}\bigg|=\sup_{z\in[0,1)}\;\phi(r),
	\end{align}
	where 
	\begin{align*}
		\phi(r)=\beta(1+r)-\frac{r-t}{1-tr}.
	\end{align*}
	A simple computation shows that
	\begin{align*}
		\phi^{\prime}(r)=\beta-\frac{1-t^2}{(1-rt)^2}\;\;\mbox{and}\;\;\phi^{\prime\prime}(r)=-\frac{2t(1-t^2)(1-rt)}{(1-rt)^4}<0\;\;\mbox{for all r}\;\in[0,1).
	\end{align*}
	Now, $\phi^{\prime}(r)=0$ gives
	\begin{align*}
		r=r_0:=\frac{1}{t}-\frac{1}{t}\sqrt{\frac{1-t^2}{\beta}}.
	\end{align*}
	Thus the maximum value of $\phi$ is attained at $r_0$. Hence, we have
	\begin{align}\label{Eq-5.5}
		M_t=\phi(r_0)=\frac{1+2\beta-2\beta\sqrt{\frac{1-t^2}{\beta}}}{t}.
	\end{align}
	In view of \eqref{Eq-5.4} and \eqref{Eq-5.5}, it is clear that $M_t\leq||P_{f_t}||\leq2\lambda+1$. We note that $M_t$ is increasing function for t and $M_t\rightarrow1+2\beta$ as $t\rightarrow1$. This shows that estimate is the best possible.
\end{proof}

\noindent\textbf{Compliance of Ethical Standards:}\\

\noindent\textbf{Conflict of interest.} The authors declare that there is no conflict  of interest regarding the publication of this paper.\vspace{1.5mm}

\noindent\textbf{Data availability statement.}  Data sharing is not applicable to this article as no datasets were generated or analyzed during the current study.

    \end{document}